\patchcmd{\thebibliography}{*}{}{}{}
\definecolor{darkblue}{rgb}{0,0,0.4} 
\tikzset{zxplane/.style={canvas is zx plane at y=#1,very thin}}
\tikzset{xyplane/.style={canvas is xy plane at z=#1,very thin}}
\tikzset{yzplane/.style={canvas is yz plane at x=#1,very thin}}
\tikzstyle{intext}=[rectangle,fill=white,inner sep=1pt,outer sep=2pt, fill opacity=0.7,text opacity=1]
\tikzset{doubar/.style={double, double equal sign distance, -implies}}
 \providecommand\@dotsep{5}
 \def\listtodoname{List of Todos}
 \def\listoftodos{\@starttoc{tdo}\listtodoname}
\newcommand{\ZZ}{\mathbb Z}
\newcommand{\co}{\nobreak\mskip2mu\mathpunct{}\nonscript
  \mkern-\thinmuskip{:}\penalty300\mskip6muplus1mu\relax}
\newcommand{\lbracket}{[}
\newcommand{\rbracket}{]}
\DeclareMathOperator{\Hom}{Hom}
\theoremstyle{plain}
\newtheorem{proposition}[equation]{Proposition}
\newtheorem{lemma}[equation]{Lemma}
\newtheorem{corollary}[equation]{Corollary}
\newtheorem{definition}[equation]{Definition}
\theoremstyle{definition}
\theoremstyle{remark}
\newtheorem{remark}[equation]{Remark}
\newcommand\HH{\mathit{HH}}
\newcommand\Hochschild\HH
\newcommand\qCH{\mathit{qCH}}
\newcommand\qHH{\mathit{qHH}}
\newcommand\Id{\mathbb{I}}
\newcommand\DTP[1][]{\mathbin{\otimes^L_{#1}}}
\newcommand{\ModCat}{\mathsf{Mod}}
\newcommand\honestalg[3]{\bigl\lbracket
\begin{smallmatrix} #1\@ifempty{#3}{}{&#3} \\ #2 \end{smallmatrix}
\bigr\rbracket}
\newcommand{\lsub}[2]{{}_{#1}#2}
\newcommand*\wthelper[2]{%
        \hbox{\dimen@\accentfontxheight#1%
                \accentfontxheight#11.3\dimen@
                $\m@th#1\widetilde{#2}$%
                \accentfontxheight#1\dimen@
        }%
}
\newcommand*\accentfontxheight[1]{%
        \fontdimen5\ifx#1\displaystyle
                \textfont
        \else\ifx#1\textstyle
                \textfont
        \else\ifx#1\scriptstyle
                \scriptfont
        \else
                \scriptscriptfont
        \fi\fi\fi3
}
\newlength\xvec@height%
\newlength\xvec@depth%
\newlength\xvec@width%
\newcommand{\xvec}[2][]{%
  \ifmmode%
    \settoheight{\xvec@height}{$#2$}%
    \settodepth{\xvec@depth}{$#2$}%
    \settowidth{\xvec@width}{$#2$}%
  \else%
    \settoheight{\xvec@height}{#2}%
    \settodepth{\xvec@depth}{#2}%
    \settowidth{\xvec@width}{#2}%
  \fi%
  \def\xvec@arg{#1}%
  \def\xvec@dd{:}%
  \def\xvec@d{.}%
  \raisebox{.2ex}{\raisebox{\xvec@height}{\rlap{%
    \kern.05em
    \begin{tikzpicture}[scale=1]
    \pgfsetroundcap
    \draw (.05em,0)--(\xvec@width-.05em,0);
    \draw (\xvec@width-.05em,0)--(\xvec@width-.15em, .075em);
    \draw (\xvec@width-.05em,0)--(\xvec@width-.15em,-.075em);
    \ifx\xvec@arg\xvec@d%
      \fill(\xvec@width*.45,.5ex) circle (.5pt);%
    \else\ifx\xvec@arg\xvec@dd%
      \fill(\xvec@width*.30,.5ex) circle (.5pt);%
      \fill(\xvec@width*.65,.5ex) circle (.5pt);%
    \fi\fi%
    \end{tikzpicture}%
  }}}%
  #2%
}
\begin{document}
\title{A remark on quantum Hochschild homology}

\author{Robert Lipshitz}
 \address{Department of Mathematics, University of Oregon\\
   Eugene, OR 97403}
\thanks{\texttt{RL was supported by NSF grant DMS-1810893.}}
\email{\href{mailto:lipshitz@uoregon.edu}{lipshitz@uoregon.edu}}

\date{\today}

\newcommand{\Tangles}{\mathsf{Tan}}
\newcommand{\AModCat}{\lsub{A}\ModCat}

\begin{abstract}
  Beliakova-Putyra-Wehrli studied various kinds of traces, in relation
  to annular Khovanov homology~\cite{BPW-Kh-HH}. In particular,
  to a graded algebra and a graded bimodule over it, they
  associate a quantum Hochschild homology of the algebra with
  coefficients in the bimodule, and use this to obtain a deformation
  of the annular Khovanov homology of a link. A spectral refinement of
  the resulting invariant was recently given by
  Akhmechet-Krushkal-Willis~\cite{AKW:qHH}.

  In this short note we observe that quantum Hochschild homology
  is a composition of two familiar operations, and give a short proof
  that it gives an invariant of annular links, in some
  generality. Much of this is implicit in~\cite{BPW-Kh-HH}.
\end{abstract}

\maketitle

\begin{definition}\cite[Section 3.8.5]{BPW-Kh-HH}
  Let $A$ be a graded ring, $M$ a chain complex of graded
  $A$-bimodules (so $M$ is bigraded), and $q\in A$ an invertible central element
  with grading $0$.  The \emph{quantum Hochschild complex} of $A$ with
  coefficients in $M$ and parameter $q$ has
  $\qCH_n(A;M)=M\otimes_\ZZ A^{\otimes_\ZZ n}$ and differential
  \begin{multline*}
    \partial(m\otimes a_1\otimes\cdots\otimes a_n)=
    ma_1\otimes a_2\otimes \cdots\otimes a_n
    +\sum_{i=1}^{n-1}(-1)^im\otimes a_1\otimes\cdots\otimes a_ia_{i+1}\otimes\cdots\otimes a_n
    \\+(-1)^nq^{-|a_n|}a_nm\otimes a_1\otimes\cdots\otimes a_{n-1}.
  \end{multline*}
  The homology of this complex is
  the \emph{quantum Hochschlid homology} $\qHH_\bullet(A;M)$ of $A$ with coefficients in $M$ and parameter $q$.
\end{definition}

The goal of this note is to reformulate this operation and deduce that
it often leads to annular link invariants. The data of $A$ and $q$ specifies a ring homomorphism $f_q\co A\to A$ defined on homogeneous elements $a$ of $A$ by 
\[
  f_q(a)=q^{-|a|}a,
\]
where $|a|$ denotes the grading of $a$. We can twist the left action
of the $A$-bimodule $M$ by $f_q$ to obtain a new bimodule
$\lsub{f_q}M$ which is equal to $M$ as a right $A$-module and has left
action given by the composition $A\otimes_\ZZ
\lsub{f_q}M\stackrel{f_q\otimes\Id}{\longrightarrow}A\otimes M\stackrel{m}{\longrightarrow}M=\lsub{f_q}M$. This operation is a special case of tensor product:
\[
  \lsub{f_q}M\cong \lsub{f_q}A\otimes_A M.
\]
Our first observation is:
\begin{proposition}
  The quantum Hochschild homology of $A$ with coefficients in $M$ is
  isomorphic to the ordinary Hochschild homology of $A$ with
  coefficients in $\lsub{f_q}M$.
\end{proposition}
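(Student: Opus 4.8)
The plan is to exhibit an explicit chain isomorphism between $\qCH_\bullet(A;M)$ and the ordinary Hochschild complex $\CH_\bullet(A;\lsub{f_q}M)$. Both complexes have the same underlying bigraded groups in each degree, namely $M\otimes_\ZZ A^{\otimes_\ZZ n}$, so the content is entirely in comparing the differentials. Recall that the ordinary Hochschild differential on $\CH_\bullet(A;N)$ for a bimodule $N$ is
\[
  \partial(n\otimes a_1\otimes\cdots\otimes a_n)=
  na_1\otimes\cdots\otimes a_n
  +\sum_{i=1}^{n-1}(-1)^i n\otimes\cdots\otimes a_ia_{i+1}\otimes\cdots\otimes a_n
  +(-1)^n a_n n\otimes a_1\otimes\cdots\otimes a_{n-1},
\]
where in the last term $a_n n$ uses the \emph{left} action of $A$ on $N$. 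Taking $N=\lsub{f_q}M$, the left action is twisted, so $a_n\cdot_{f_q} m = f_q(a_n)m = q^{-|a_n|}a_n m$, and this term becomes $(-1)^n q^{-|a_n|}a_n m\otimes a_1\otimes\cdots\otimes a_{n-1}$, which is precisely the last term in the quantum differential. The first two groups of terms are manifestly identical, since they only involve the right action and the internal multiplications, which are unchanged. Hence the identity map on $M\otimes_\ZZ A^{\otimes_\ZZ n}$ is already a chain isomorphism, and passing to homology gives the claimed isomorphism $\qHH_\bullet(A;M)\cong\HH_\bullet(A;\lsub{f_q}M)$.

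The one point requiring a moment's care is that $f_q$ is a well-defined ring homomorphism and that $\lsub{f_q}M$ is a genuine $A$-bimodule, so that the right-hand side even makes sense. Since $q$ is central, invertible, and of grading $0$, the assignment $a\mapsto q^{-|a|}a$ is additive, sends $1\mapsto 1$, and is multiplicative: for homogeneous $a,b$ we have $f_q(ab)=q^{-|a|-|b|}ab=q^{-|a|}a\cdot q^{-|b|}b=f_q(a)f_q(b)$, using centrality to move the $q^{-|b|}$ past $a$. It also preserves gradings (as $|q|=0$), so it is a homomorphism of graded rings. Consequently the twisted left action $a\cdot_{f_q}m:=f_q(a)m$ satisfies the module axioms and commutes with the unchanged right action (again by centrality of $q$, or simply because left multiplication by $f_q(a)$ was already a right-$A$-module map on $M$). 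The identification $\lsub{f_q}M\cong\lsub{f_q}A\otimes_A M$ noted in the excerpt is not logically needed for this proof but confirms the bimodule structure is the standard twisted one.

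No real obstacle is anticipated: the proposition is essentially a bookkeeping observation, and the "proof" is the remark that the two differentials literally agree term by term after unwinding the definition of the twisted left action. If $M$ is a chain complex of bimodules rather than a single bimodule, one should additionally check that the identity map is compatible with the internal differential of $M$ — but this is immediate, as $f_q$ is a fixed ring map and twisting the module structure does not touch the internal differential, so the total complexes match on the nose. The only thing worth stating explicitly in the write-up is the sign and the grading shift $q^{-|a_n|}$ in the last term, to make transparent that it comes from the twist $f_q$ on the single place where the left action enters the Hochschild differential.
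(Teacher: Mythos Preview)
Your proposal is correct and follows exactly the approach the paper intends: the paper's own proof is the single sentence ``This is immediate from the definitions,'' and your argument is precisely the unwinding of that immediacy, showing that the identity map on $M\otimes_\ZZ A^{\otimes_\ZZ n}$ intertwines the two differentials because the only term involving the left action is the last one, where the twist by $f_q$ produces the factor $q^{-|a_n|}$. Your additional remarks (that $f_q$ is a graded ring homomorphism, and compatibility with the internal differential of $M$) are sound and simply make explicit what the paper leaves to the reader.
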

\begin{proof}
  This is immediate from the definitions.
\end{proof}

Call a chain complex of graded $A$-bimodules $M$ \emph{weakly central} if for any
graded $A$-bimodule $N$ there is a quasi-isomorphism $M\DTP[A] N\simeq
N\DTP[A] M$.
\begin{lemma}\label{lem:fq-central}
  The bimodule $\lsub{f_q}A$ is weakly central.
\end{lemma}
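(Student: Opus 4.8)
The plan is to prove slightly more than asked: that $\lsub{f_q}A\DTP[A]N$ and $N\DTP[A]\lsub{f_q}A$ are isomorphic, not merely quasi-isomorphic, as complexes of graded $A$-bimodules, for every complex $N$ of graded $A$-bimodules (in particular for every graded $A$-bimodule $N$). This is possible because $\lsub{f_q}A$ is an invertible, hence very rigid, bimodule. First I would record that $f_q$ is a grading-preserving ring automorphism of $A$, with inverse $a\mapsto q^{|a|}a$: indeed $f_q(ab)=q^{-|ab|}ab=(q^{-|a|}a)(q^{-|b|}b)=f_q(a)f_q(b)$, using that $q$ is central. Consequently $\lsub{f_q}A$ is free of rank one as a left $A$-module (generated by $1$, since $a\cdot 1=f_q(a)$ runs over all of $A$) and, tautologically, free of rank one as a right $A$-module. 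In particular it is flat over $A$ on both sides, so both derived tensor products collapse to ordinary ones: $\lsub{f_q}A\DTP[A]N\cong\lsub{f_q}A\otimes_A N$ and $N\DTP[A]\lsub{f_q}A\cong N\otimes_A\lsub{f_q}A$.

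Next I would identify these tensor products with twists of $N$. By the same direct computation that gives the isomorphism $\lsub{f_q}M\cong\lsub{f_q}A\otimes_A M$ noted above, one has $\lsub{f_q}A\otimes_A N\cong\lsub{f_q}N$, where $\lsub{f_q}N$ equals $N$ as a right $A$-module and has left action precomposed with $f_q$. Symmetrically, unwinding the presentation of $N\otimes_A\lsub{f_q}A$ gives $N\otimes_A\lsub{f_q}A\cong N_{f_q^{-1}}$, where $N_{f_q^{-1}}$ equals $N$ as a left module and has right action precomposed with $f_q^{-1}$; the inverse appears because $\lsub{f_q}A\cong A_{f_q^{-1}}$ as bimodules, via $f_q^{-1}$. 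Both identifications are natural in $N$ and so are isomorphisms of complexes.

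It then remains to produce an isomorphism of complexes of graded $A$-bimodules $\sigma\co\lsub{f_q}N\xrightarrow{\,\cong\,}N_{f_q^{-1}}$. Define $\sigma$ on a homogeneous element $n$ by $\sigma(n)=q^{|n|}n$, extended additively; this makes sense because $q$ is central, invertible, and of grading $0$, it preserves the internal grading, and it commutes with the differential of $N$ (which is internally graded and $A$-linear). A one-line check confirms that $\sigma$ is bijective and is $A$-bilinear for the twisted structures: $\sigma(f_q(a)\,n)=\sigma(q^{-|a|}an)=q^{|a|+|n|}(q^{-|a|}an)=q^{|n|}an=a\,\sigma(n)$ on the left, and $\sigma(nb)=q^{|n|+|b|}nb=\sigma(n)\,f_q^{-1}(b)$ on the right. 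Composing $\lsub{f_q}A\DTP[A]N\cong\lsub{f_q}A\otimes_A N\cong\lsub{f_q}N$, then $\sigma$, then $N_{f_q^{-1}}\cong N\otimes_A\lsub{f_q}A\cong N\DTP[A]\lsub{f_q}A$ yields the desired isomorphism of complexes, a fortiori a quasi-isomorphism; hence $\lsub{f_q}A$ is weakly central.

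I do not expect a genuine obstacle here: the argument is bookkeeping around the facts that $\lsub{f_q}A$ is invertible and that conjugating by $\sigma$ converts ``twisting the left action by $f_q$'' into ``twisting the right action by $f_q^{-1}$''. The one point demanding care is that last conversion --- getting the direction of the twist and the exponent of $q$ right --- which is exactly where the hypotheses that $q$ is central and of grading $0$ enter.
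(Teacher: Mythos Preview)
Your proof is correct and follows the same approach as the paper: the paper's one-line proof simply writes down the isomorphism $M\otimes_A \lsub{f_q}A\to \lsub{f_q}A\otimes_A M$, $m\mapsto q^{-|m|}m$, which is exactly (the inverse of) your map $\sigma$ after the identifications with $\lsub{f_q}N$ and $N_{f_q^{-1}}$. You have merely unpacked the surrounding details (flatness so that $\DTP[A]=\otimes_A$, the bilinearity check) that the paper leaves to the reader.
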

\begin{proof}
  The isomorphism $M\otimes_A \lsub{f_q}A\to \lsub{f_q}A\otimes M$
  sends $m$ to $q^{-|m|}m$.
\end{proof}

We turn next to annular link invariants. Consider the category
$\Tangles$ with one object for each even integer and $\Hom(2m,2n)$
given by the set of isotopy classes of $(2m,2n)$-tangles (embedded in
$D^2\times [0,1]$). Given a (graded) algebra $A$, a \emph{very weak
  action} of $\Tangles$ on the category of $A$-modules is a choice of
quasi-isomorphism class of chain complex of (graded) $A$-bimodules
$C(T)$ for each $T\in\Hom(2m,2n)$ so that $C(T_2\circ T_1)$ is
quasi-isomorphic to $C(T_2)\DTP[A] C(T_1)$. For example, if we take
$A$ to be the direct sum of the Khovanov arc
algebras~\cite{Khovanov02:Tangles} then Khovanov defined a very weak
action of $\Tangles$ on $\AModCat$, and if we define $A$ to be the
direct sum of the Chen-Khovanov algebras~\cite{CK-kh-tangle} then
Chen-Khovanov defined a very weak action of $\Tangles$ on
$\AModCat$. (In fact, in both cases, they did more; cf.\
Remark~\ref{rem:functoriality}.)

Any $(2n,2n)$-tangle $T\subset D^2\times [0,1]$ has an \emph{annular closure} in $D^2\times S^1$.
\begin{proposition}\label{prop:vweak}
  Fix a very weak action of $\Tangles$ on $\AModCat$ and a weakly
  central $A$-bimodule $P$. Then for any $(2n,2n)$-tangle $T$, the
  isomorphism class of $\HH_*(A;C(T)\DTP[A] P)$ is an invariant of the
  annular closure of $T$.
\end{proposition}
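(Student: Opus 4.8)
The plan is to prove invariance of $\HH_*(A; C(T)\DTP[A] P)$ under the two moves that generate annular link equivalence for closures of $(2n,2n)$-tangles: (1) isotopy of $T$ inside $D^2\times[0,1]$, and (2) the "trace" move, i.e. the fact that the closure of $T_2\circ T_1$ equals the closure of $T_1\circ T_2$ (cyclic rotation through the annulus). The first is immediate: a very weak action only remembers the quasi-isomorphism class of $C(T)$, and derived tensor product and Hochschild homology both preserve quasi-isomorphism classes of complexes of bimodules, so isotopic $T$ give isomorphic $\HH_*$. So the real content is move (2).

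For move (2), first I would reduce Hochschild homology to a derived tensor product over the enveloping algebra, or — more in the spirit of this note — recall the standard fact that $\HH_*(A;B)$ is computed by the "cyclic" symmetry of the two-sided bar construction, so that for complexes of bimodules $B_1, B_2$ there is a natural isomorphism
\[
  \HH_*(A; B_1\DTP[A] B_2)\;\cong\;\HH_*(A; B_2\DTP[A] B_1).
\]
This is the bimodule analogue of $\tr(XY)=\tr(YX)$; concretely the bar model for $\HH_*(A; B_1\DTP[A] B_2)$ is visibly symmetric in $B_1$ and $B_2$ up to the sign-twisted cyclic identification. Granting this, and writing $B = C(T_1)$, $B' = C(T_2)$, we get
\[
  \HH_*(A; C(T_2\circ T_1)\DTP[A] P)\simeq \HH_*(A; B'\DTP[A] B\DTP[A] P)\simeq \HH_*(A; B\DTP[A] B'\DTP[A] P),
\]
using that $C(T_2\circ T_1)\simeq C(T_2)\DTP[A] C(T_1) = B'\DTP[A] B$ from the very weak action axiom. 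Now I invoke Lemma~\ref{lem:fq-central}'s hypothesis that $P$ is weakly central: there is a quasi-isomorphism $B'\DTP[A] P\simeq P\DTP[A] B'$, hence $B\DTP[A] B'\DTP[A] P\simeq B\DTP[A] P\DTP[A] B'$, and then another cyclic permutation under $\HH_*$ gives $\HH_*(A; B'\DTP[A](C(T_1)\DTP[A] P))\simeq \HH_*(A; C(T_1\circ T_2)\DTP[A] P)$, which is the invariant computed from $T_2\circ T_1$ rewritten as the closure of $T_1\circ T_2$. Chasing the permutations, the net effect of cyclically rotating $T_1$ past $T_2$ through the annulus is exactly absorbed by one application of weak centrality of $P$ together with cyclic invariance of $\HH_*$.

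The main obstacle is bookkeeping rather than conceptual: I must make sure the weak-centrality quasi-isomorphism for $P$ is compatible with the cyclic symmetry of $\HH_*$ so that the composite of all the identifications is genuinely the identity-level statement "same annular link $\Rightarrow$ isomorphic homology," and in particular that there is no residual grading shift (this is where the parameter $q$ and the definition of $\lsub{f_q}A$ entered in the motivating example — here it is hidden inside "weakly central"). I would also want to check that moves (1) and (2) really do generate the equivalence relation "same annular closure" on $(2n,2n)$-tangles up to isotopy in $D^2\times[0,1]$; this is a standard fact about closures in the solid torus (any annular link arises as such a closure, and two tangles have isotopic closures iff they differ by planar isotopy and cyclic rotation), which I would cite rather than reprove. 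Finally, since we only claim invariance of the isomorphism class of $\HH_*$, I never need to track coherence of the isomorphisms, which keeps the argument short.
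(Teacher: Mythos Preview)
Your approach is the same as the paper's: the proof there is a single sentence invoking the trace property $\HH_*(A;M\DTP[A]N)\cong\HH_*(A;N\DTP[A]M)$, and you have correctly identified that this together with weak centrality of $P$ is all that is needed.

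However, the chain of isomorphisms you wrote is garbled. The displayed step
\[
\HH_*(A; B'\DTP[A] B\DTP[A] P)\;\simeq\;\HH_*(A; B\DTP[A] B'\DTP[A] P)
\]
is justified in your text only by the very weak action axiom, but that axiom gives the \emph{previous} step (replacing $C(T_2\circ T_1)$ by $B'\DTP[A] B$), not this one---and this display is precisely the statement you are trying to prove. You then apply weak centrality and another cyclic permutation and land back at $\HH_*(A; B'\DTP[A] B\DTP[A] P)$, so the chain is a loop that proves nothing. The correct two-step sequence is
\[
\HH_*(A; B'\DTP[A] B\DTP[A] P)\;\stackrel{\text{trace}}{\simeq}\;\HH_*(A; B\DTP[A] P\DTP[A] B')\;\stackrel{P\ \text{central}}{\simeq}\;\HH_*(A; B\DTP[A] B'\DTP[A] P),
\]
one application of each ingredient.

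One further remark on your generating-moves discussion: the conjugation move you need should allow $T_1\in\Hom(2m,2n)$ and $T_2\in\Hom(2n,2m)$ with $m\neq n$; this subsumes the stabilization move (passing the cutting disk through a fold of the link), so isotopy plus this generalized conjugation really do generate annular equivalence. The trace property still applies because here $A$ is the direct sum over all $n$.
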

\begin{proof}
  This is immediate from the definitions and the trace property of Hochschild
  homology, i.e., that given $A$-bimodules $M$ and $N$,
  \[
    \HH_*(A;M\DTP[A]N)\cong \HH_*(A;N\DTP[A]M).\qedhere
  \]
\end{proof}

\begin{corollary}
  Up to isomorphism, the quantum Hochschild homology of the
  Chen-Khovanov bimodule associated to a $(2n,2n)$-tangle $T$ is an
  invariant of the annular closure of $T$.
\end{corollary}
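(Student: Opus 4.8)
The plan is to obtain the statement as a special case of Proposition~\ref{prop:vweak}, taking the weakly central bimodule there to be $P=\lsub{f_q}A$.

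Let $A$ be the direct sum of the Chen--Khovanov algebras. By~\cite{CK-kh-tangle}, the assignment $T\mapsto C(T)$ is a very weak action of $\Tangles$ on $\AModCat$, i.e.\ $C(T_2\circ T_1)\simeq C(T_2)\DTP[A]C(T_1)$. Fix an invertible central element $q\in A$ of grading $0$; as in~\cite{BPW-Kh-HH}, such a $q$ becomes available after enlarging the ground ring to $\ZZ[q^{\pm1}]$, and nothing below depends on the precise choice. By Lemma~\ref{lem:fq-central} the bimodule $\lsub{f_q}A$ is weakly central, so Proposition~\ref{prop:vweak} applies with $P=\lsub{f_q}A$ and tells us that the isomorphism class of $\HH_*\bigl(A;\,C(T)\DTP[A]\lsub{f_q}A\bigr)$ depends only on the annular closure of $T$.

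To finish, I would identify this group with $\qHH_\bullet(A;C(T))$. By the Proposition relating quantum and ordinary Hochschild homology, $\qHH_\bullet(A;C(T))\cong\HH_*(A;\lsub{f_q}C(T))$, and by the remark preceding Lemma~\ref{lem:fq-central}, $\lsub{f_q}C(T)\cong\lsub{f_q}A\otimes_A C(T)$. Since the twist by $f_q$ alters only the left module structure, $\lsub{f_q}A$ is free --- in particular flat --- as a \emph{right} $A$-module, so the functor $\lsub{f_q}A\otimes_A(-)$ is exact and this underived tensor product already computes the derived one; thus $\lsub{f_q}C(T)\simeq\lsub{f_q}A\DTP[A]C(T)$. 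Applying the trace property of Hochschild homology to interchange the two tensor factors then gives
\[
  \qHH_\bullet(A;C(T))\;\cong\;\HH_*\bigl(A;\lsub{f_q}A\DTP[A]C(T)\bigr)\;\cong\;\HH_*\bigl(A;C(T)\DTP[A]\lsub{f_q}A\bigr),
\]
which by the previous paragraph is an invariant of the annular closure of $T$. The same argument should apply verbatim with the Khovanov arc algebras~\cite{Khovanov02:Tangles} replacing the Chen--Khovanov algebras.

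I do not expect a real obstacle here: everything is formal once Proposition~\ref{prop:vweak} and the quantum-versus-ordinary Hochschild comparison are in hand. The only points that call for a little care are (i) that $\lsub{f_q}A$ is flat on the correct side, so that it legitimately models a derived tensor product; (ii) the left/right interchange, via the trace property, needed to match the order of tensoring in Proposition~\ref{prop:vweak} against the convention built into the definition of $\qCH$; and (iii) the existence of a grading-$0$ invertible central $q$ in the Chen--Khovanov setting, which is precisely why one works over $\ZZ[q^{\pm1}]$, following~\cite{BPW-Kh-HH}.
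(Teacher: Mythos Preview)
Your argument is correct and follows exactly the paper's approach: combine Lemma~\ref{lem:fq-central}, Proposition~\ref{prop:vweak}, and the Chen--Khovanov very weak action, together with the earlier identification $\qHH_\bullet(A;M)\cong\HH_*(A;\lsub{f_q}M)$. You have simply made explicit the intermediate identifications (flatness of $\lsub{f_q}A$ on the right, and the trace swap) that the paper compresses into the word ``immediate.''
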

\begin{proof}
  This is immediate from Lemma~\ref{lem:fq-central},
  Proposition~\ref{prop:vweak}, and the fact that the Chen-Khovanov
  bimodules induce a very weak action of
  $\Tangles$~\cite{CK-kh-tangle}.
\end{proof}

\begin{remark}\label{rem:functoriality}
  To keep this note short, we have not discussed functoriality of
  these annular link invariants under annular cobordisms. To do so,
  one replaces $\Tangles$ by an appropriate $2$-category of tangles
  and weak centrality by a notion keeping track of the
  isomorphisms. See~\cite{BPW-Kh-HH} for further discussion.
\end{remark}

\bibliographystyle{hamsalpha}
\bibliography{heegaardfloer}
\end{document}
